\numberwithin{equation}{section}
\newtheorem{theorem}{Theorem}[section]
\newtheorem{proposition}[theorem]{Proposition}
\newtheorem{lemma}[theorem]{Lemma}
\theoremstyle{remark}
\newtheorem{remark}{Remark}[section]
\theoremstyle{definition}
\newtheorem{definition}[theorem]{Definition}
\def\XXint#1#2#3{{\setbox0=\hbox{$#1{#2#3}{\int}$ }
\vcenter{\hbox{$#2#3$ }}\kern-.58\wd0}}
\begin{document}
\title
[Local smoothing for the massless Dirac-Coulomb equation]
{Local smoothing estimates for the massless \\Dirac-Coulomb equation in 2 and 3 dimensions}
\begin{abstract}
We prove local smoothing estimates for the massless Dirac equation with a Coulomb potential in $2$ and $3$ dimensions. Our strategy is inspired by \cite{burq1} and relies on partial wave subspaces decomposition and spectral analysis of the Dirac-Coulomb operator.
\end{abstract}
\date{\today}    
\author{Federico Cacciafesta}
\address{Federico Cacciafesta: 
SAPIENZA - Universit\`a di Roma,
Dipartimento di Matematica, 
Piazzale A.~Moro 2, I-00185 Roma, Italy}
\email{cacciafe@mat.uniroma1.it}
\urladdr{www.mat.uniroma1.it/people/~cacciafesta}

\author{\'Eric S\'er\'e}
\address{\'Eric S\'er\'e: CEREMADE (UMR CNRS 7534), Universit\'e Paris-Dauphine,
Place de Lattre de Tassigny,
F-75775 Paris Cedex 16}
\email{sere@ceremade.dauphine.fr}
\urladdr{www.ceremade.dauphine.fr/~sere}


\bibliographystyle{plain}

\maketitle

\section{Introduction and generalities}
The massless Dirac equation with an electric Coulomb potential reads
\begin{equation}\label{diraccoul}
\begin{cases}
\displaystyle
 iu_t+\mathcal{D}_nu-\frac{\nu}{|x|} u=0,\quad u(t,x):\mathbb{R}_t\times\mathbb{R}_x^n\rightarrow\mathbb{C}^{N}\\
u(0,x)=f(x)
\end{cases}
\end{equation}
where  the massless Dirac operator $\mathcal{D}_n$ is defined in terms of the Pauli matrices
\begin{equation}
\sigma_1=\left(\begin{array}{cc}0 & 1 \\1 & 0\end{array}\right),\quad
\sigma_2=\left(\begin{array}{cc}0 &-i \\i & 0\end{array}\right),\quad
\sigma_3=\left(\begin{array}{cc}1 & 0\\0 & -1\end{array}\right)
\end{equation}
as 
$$
\mathcal{D}_2=-i(\sigma_1\partial_{x}+\sigma_2\partial_{y})=
\left(\begin{array}{cc}0 & -i\partial_z \\-i\partial_{\overline{z}} & 0\end{array}\right)
$$
(we denote $\partial_z=\partial_x-i\partial_y$ and $\partial_{\overline{z}}=\partial_x+i\partial_y$) in dimension $n=2$ with $N=2$, and 
$$
\mathcal{D}_3=-i\displaystyle\sum_{k=1}^3\alpha_k\partial_k=-i(\alpha\cdot\nabla)
$$ 
where the $4\times 4$ Dirac matrices are given by
\begin{equation}
\alpha_k=\left(\begin{array}{cc}0 & \sigma_k \\\sigma_k & 0\end{array}\right),\quad k=1,2,3
\end{equation}
in dimension $n=3$, with $N=4$.

The Dirac equation is widely used in physics to describe relativistic particles of spin $1/2$ (see e.g. \cite{landlif2}, \cite{thaller}). In particular, the $2D$ massless equation is used as a model for charge carriers in graphene, a layer of carbon atoms arranged in a honeycomb lattice (see e.g. the physics survey \cite{neto} and the mathematical papers \cite{fef}, \cite{ha-le-sp}, \cite{ma-sie}). Note that there have been many rigorous results in the recent years on the stationary (massive) Dirac equation, essentially in dimension 3 (see e.g. \cite{lewser} for references). Comparatively, its dynamics has been less investigated up to now.

The $\sigma_j$ and $\alpha_j$ matrices were introduced in view of making the Dirac operator a square root of the Laplace operator: therefore, they satisfy by construction the following anticommutating relations
\begin{equation*}
\sigma_j\sigma_k+\sigma_k\sigma_j=2\delta_{ik}\mathbb{I}_2,\quad j,k=1,2;
\end{equation*}
\begin{equation*}
\alpha_j\alpha_k+\alpha_k\alpha_j=2\delta_{ik}\mathbb{I}_4,\quad j,k=1,2,3,
\end{equation*}
(we recall that the spectrum of $\mathcal{D}_n$ is the whole line $\mathbb{R}$). These conditions ensure that
$$
(i\partial_t-\mathcal{D}_n)(i\partial_t+\mathcal{D}_n)=(\Delta-\partial^2_{tt})I_{N},
$$
which strictly links the dynamics of the \emph{free} massless Dirac equation to a system of $N$ decoupled wave equations. Therefore, dynamical properties for the free Dirac equation can directly be derived from their wave counterparts: dispersive properties of the flow can be mainly encoded in the celebrated family of \emph{Strichartz estimates}, which are given by
\begin{equation}\label{freestr}
\| e^{it\mathcal{D}_n}f\|_{L^p_t\dot H^{\frac{1}{q}-\frac{1}{p}-\frac{1}{2}}_x}\lesssim
\| f\|_{L^2}
\end{equation}
where the exponents $(p,q)$ are wave admissible, i.e. satisfy
$$
\frac{2}{p}+\frac{d-1}{q}=\frac{d-1}2,\quad 2\leq p\leq\infty,\quad\frac{2(d-1)}{d-3}\geq q\geq 2
$$
and $e^{it\mathcal{D}_n}$ represents the propagator for the solutions to \eqref{diraccoul} with $V=0$ for $n=2,3$.
We stress the fact that the estimate corresponding to the couple $(p,q)=(2,\infty)$ in dimension $3$, the so called \emph{endpoint}, that is
\begin{equation}\label{end}
\| e^{it\mathcal{D}_3}f\|_{L^2_tL^\infty_x}\lesssim\| f\|_{\dot{H}^1}
\end{equation}
is known to fail as the corresponding one for the 3D wave flow. In \cite{machi} a refined version of this estimate involving angular spaces was given. As a consequence, the well-posedness of the celebrated cubic nonlinear Dirac equation was established, assuming initial data in $\dot{H}^1$ with slight additional regularity on the angular variable. The problem was afterwards completely solved in \cite{bejher}. See also \cite{bejher2} for the $2D$ case. 

Another family of {\it a priori} estimates encoding informations about dispersion is given by the so called \emph{local-smoothing type estimates}, which turn to be particularly useful to handle potential-type perturbations. Different versions of these estimates are available: to the best of our knowledge the most general one in this setting is
\begin{equation}\label{freemor}
\|w_\sigma^{-1/2}e^{it\mathcal{D}_3}f\|_{L^2_tL^2_x}\leq C\|f\|_{L^2}
\end{equation}
where $w_\sigma(x)=|x|(1+|\log|x||)^\sigma$, $\sigma>1$. Both \eqref{freestr} and \eqref{freemor} can be found in \cite{danfan} (see also \cite{esc-ve}, \cite{bouss}). We are not aware of any corresponding result for the $2D$ case; nevertheless, the strategy developed in \cite{danfan} can be suitably adapted to this context to obtain similar results.

In recent years a lot of effort has been spent in order to investigate what happens to dispersive flows when potential perturbations come into play. In \cite{danfan} both \eqref{freestr} and \eqref{freemor} were proved for the flow $e^{it\mathcal({D}_3+V)}$ under the assumption 
$$
|V(x)|\leq\frac{\delta}{w_\sigma(x)},\qquad \sigma>1,\qquad n=3,
$$
for some $\delta$ sufficiently small, which by the way is a sufficient condition to guarantee that the operator $\mathcal{D}_3+V$ is selfadjoint (see \cite{thaller}). Similar results are available for magnetic potentials as well (see e.g. \cite{boussdanfan}, \cite{caccia3}). The angular endpoint estimates proved in \cite{machi} have been generalized in \cite{cac1}, \cite{cac2} to small potential perturbations, by introducing some new mixed Strichartz-smoothing estimates.  At the same time, some effort has been spent in order to find examples of potentials such that the correspoding flows do \emph{not} disperse (in the sense discussed above). This problem has been tackled for the magnetic Dirac equation in \cite{arrizfan} in $3D$, essentially showing that for vector potentials of the form $A(x)\cong |x|^{-\delta} Mx$ with $\delta\in (1,2)$ most of the mass of the solution is localized around a non-dispersive function.
These arguments suggest that, heuristically, the degree of homogeneity of the operator works as a threshold for the validity of dispersive estimates: therefore, for the Dirac equation, the Coulomb potential is to be thought of as a \emph{critical} case (we stress the fact that the restriction to the massless case seems unavoidable now). In the very last years the problem of understanding dispersive estimates for scaling invariant potentials has been approached for other dynamics: in \cite{burq1} and \cite{burq2} the authors have proved indeed that Morawetz and Strichartz estimates can be recovered for both the Schr\"odinger and wave equations with inverse square potentials, and in \cite{fan} the $L^1\rightarrow L^\infty$ time decay estimate is proved in the electromagnetic case. 

The aim of this paper is to adapt the techniques of \cite{burq1} to the massless Dirac Coulomb equation, and thus to prove local smoothing estimates for solutions to \eqref{diraccoul}. The strategy of proof of \cite{burq1} is fairly straightforward: the basic idea is to use a spherical harmonics decomposition to reduce the equation to a radial one and then rely on the Hankel transform to diagonalize it. Several problems arise when trying to adapt these ideas to the Dirac setting. The first difficulty is given by the fact that the Dirac operator does not preserve radiality, and therefore one needs a more complex, but still classical, spherical harmonics decomposition related to the $SU_2$ group action (see \cite{thaller}, section 4.6). Another issue is that we cannot use the Hankel transform, and we have to build up a relativistic analogue. To do this, we need to study the continuous spectrum of the Dirac-Coulomb model, both in $2$ and $3$ dimensions. The corresponding generalized eigenfunctions are well known (see for instance \cite{landlif2} and \cite{nov}), but their rich structure will raise some additional technical difficulties. We stress the fact that, to the best of our knowledge, this is the first result concerning dispersive properties of Dirac-Coulomb, an operator which plays a fundamental role in relativistic quantum chemistry. We hope that our result will be useful in the study of the dynamics of nonlinear models involving, for instance, several relativistic electrons in a molecule.

Before stating our main result, we introduce some notations that will be used throughout the paper.
\\\\
\textbf{Notations.}

We shall use the standard notation $\dot H^s$ for the homogeneous Sobolev space with the norm $\| f\|_{\dot H^s}=\| |D|^sf\|_{L^2}$ where $|D|=(-\Delta)^{1/2}$, and $L^p_tL^q_x=L^p(\mathbb{R}_t; L^q(\mathbb{R}^n_x))$ for the mixed space-time Strichartz spaces.
%
We denote with $\Omega^s$ the operator
\begin{equation*}
(\Omega^s\phi)(x)=|x|^s\phi(x)
\end{equation*}
and with a little abuse of notation we will use the same symbol to indicate the operators which are pointwise equal for all times,
\begin{equation*}
(\Omega^s\psi)(t,x)=|x|^s\psi(t,x).
\end{equation*}
We introduce the following sets of indices 
\begin{equation}\label{B2}
B_2(k):=\{k:k=1/2+m,m\in\mathbb{Z}\}
\end{equation}
\begin{equation}\label{B3}
B_3(k):=\{k:k\in \mathbb{Z}\backslash\{0\}\}.
\end{equation}
As most of the forthcoming objects will be significantly different in $2D$ or $3D$, we will often use the apex $n$ to distinguish the dimensions; this will not have to be confused with standard powers, but interpretation will be clear from time to time.
For the definition of the angular spaces $\mathcal{H}_{k}^n$, $n=2,3$ we refer to forthcoming Proposition \ref{thaller} and subsequent Remark \ref{rknot}.
%
\\\\

We are now ready to state our main Theorem.

\begin{theorem}\label{thmor}
Let $n=2,3$, $\nu_n\in\left(-\frac{n-1}2,\frac{n-1}2\right)$, and let $k_n\in B_n(k)$. Let $u$ be a solution of \eqref{diraccoul}. Then, for any 
$$1/2<\alpha<\sqrt{k_n^2-\nu_n^2}+1/2$$
 and any $f\in L^2((0,\infty))\otimes \mathcal{H}_{\geq {k_n}}^n$ there exists a constant $C=C(\nu,\alpha,k)$ such that the following estimate holds
\begin{equation}\label{mor}
\displaystyle
\left\|\Omega^{-\alpha}\left|\mathcal{D}_n-\frac{\nu_n}{|x|}\right|^{1/2-\alpha} u\right\|_{L^2_tL^2_x}\leq C \|f\|_{L^2}.
\end{equation}
\end{theorem}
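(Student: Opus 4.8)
The plan is to follow the scheme of \cite{burq1}: decompose the equation along partial wave subspaces, diagonalize the resulting radial operator via a relativistic analogue of the Hankel transform, and then reduce the desired estimate to a uniform-in-frequency weighted bound for a one-dimensional model operator. First I would use the $SU_2$-type spherical harmonics decomposition (Proposition \ref{thaller}) to write $L^2(\mathbb{R}^n)^N=\bigoplus_{k\in B_n(k)} L^2((0,\infty),r^{n-1}\,dr)\otimes\mathcal{H}_k^n$, on each summand of which the Dirac-Coulomb operator $\mathcal{D}_n-\nu_n/|x|$ acts as an explicit $2\times2$ (in $n=2$) or $2\times2$-block (in $n=3$) radial matrix differential operator $h_k$ depending only on $k$ and $\nu_n$. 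Since the flow $e^{it(\mathcal{D}_n-\nu_n/|x|)}$ commutes with this decomposition and with $\Omega^{-\alpha}$, and since $f\in L^2((0,\infty))\otimes\mathcal{H}_{\geq k_n}^n$, it suffices to prove \eqref{mor} on a single partial wave subspace with index $k$ satisfying $|k|\geq|k_n|$, uniformly in $k$ — this is where the hypothesis $\alpha<\sqrt{k_n^2-\nu_n^2}+1/2$ will enter, through the quantity $\gamma_k:=\sqrt{k^2-\nu_n^2}$.

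Next I would construct the spectral resolution of the self-adjoint realization of $h_k$ on the half-line. The generalized eigenfunctions are classical (see \cite{landlif2}, \cite{nov}): they are expressed through confluent hypergeometric / Bessel-type functions, and near the origin they behave like $r^{\gamma_k-1/2}$ with $\gamma_k=\sqrt{k^2-\nu_n^2}>0$ (this positivity is exactly the condition $\nu_n\in(-(n-1)/2,(n-1)/2)$ together with $|k|\geq(n-1)/2$). These eigenfunctions give rise to a unitary "relativistic Hankel transform" $\mathcal{U}_k:L^2((0,\infty),r^{n-1}dr)^{\,2}\to L^2((0,\infty),d\lambda)^{\,2}$ that conjugates $h_k$ to multiplication by $\lambda$. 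After this conjugation, $e^{ith_k}$ becomes multiplication by $e^{it\lambda}$, $|h_k|^{1/2-\alpha}$ becomes multiplication by $|\lambda|^{1/2-\alpha}$, and, via Plancherel in $t$, the left-hand side of \eqref{mor} is controlled (as in \cite{burq1}) by the operator norm of $\Omega^{-\alpha}\,\mathcal{U}_k^{-1}\,|\lambda|^{1/2-\alpha}$, i.e. by a bound of the form
\begin{equation*}
\sup_{\lambda>0}\ \big\| \Omega^{-\alpha} e_k(\cdot,\lambda)\big\|_{?}\ \lesssim\ \lambda^{\alpha-1/2},
\end{equation*}
where $e_k(r,\lambda)$ is the (suitably normalized) generalized eigenfunction. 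By the scaling invariance of the whole problem (the Coulomb potential and the Dirac operator are both homogeneous of degree $-1$) one may set $\lambda=1$, and the claim reduces to the single quantitative statement that $\||x|^{-\alpha} e_k(\cdot,1)\|$ is finite in the relevant norm, uniformly in $k$ with $|k|\ge|k_n|$.

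The main obstacle is this last estimate: one must show that the weighted generalized eigenfunctions lie in the correct space and that the constant does not degenerate. At $r\to0$ the eigenfunction decays like $r^{\gamma_k-1/2}$, so $|x|^{-\alpha}e_k$ is square-integrable near the origin precisely when $\alpha<\gamma_k+1/2$, which forces the upper cutoff on $\alpha$ in the theorem and explains why it is stated in terms of $\gamma_{k_n}$. At $r\to\infty$ the eigenfunction is oscillatory and only bounded, so the weight $r^{-\alpha}$ with $\alpha>1/2$ is needed for integrability there; this accounts for the lower bound $\alpha>1/2$. Making these asymptotics quantitative requires careful uniform-in-$k$ control of the hypergeometric functions and of the normalization constant of $e_k$ (equivalently, of the spectral measure / Wronskian); this is precisely the "rich structure raising additional technical difficulties" alluded to in the introduction, and it is handled separately in the two dimensions because the explicit special functions differ. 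Once the uniform bound on each partial wave is in hand, summing over $k\ge k_n$ — or rather, noting that the orthogonal decomposition turns the full estimate into a supremum over $k$ — completes the proof.
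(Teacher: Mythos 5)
Your overall scheme coincides with the paper's: partial wave decomposition, a ``relativistic Hankel transform'' $\mathcal{P}_k$ built from the generalized eigenfunctions \eqref{eigen1}--\eqref{eigen2}, Plancherel in time, and reduction --- using the fact that $\psi_{k,\varepsilon}(r)$ depends on $\varepsilon$ and $r$ only through $\varepsilon r$ (your ``scaling to $\lambda=1$'') --- to a weighted $L^2$ bound on the generalized eigenfunctions at frequency one, uniform in $k$. Your endpoint analysis at $r\to0$ and $r\to\infty$ also correctly locates the origin of the two constraints $\alpha<\sqrt{k^2-\nu^2}+1/2$ and $\alpha>1/2$.

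The genuine gap is that the uniform-in-$k$ bound --- the entire content of Proposition \ref{tecint}, which is the heart of the proof --- is asserted rather than proved. Pointwise asymptotics of the eigenfunctions at $0$ and $\infty$ give finiteness of the weighted norm for each \emph{fixed} $k$, but not a bound independent of $k$: the normalization constant $|\Gamma(\gamma+1+i\nu)|/\Gamma(2\gamma+1)$ and the prefactor $(2\varepsilon r)^{\gamma}$ vary rapidly with $\gamma=\sqrt{k^2-\nu^2}$, the transition region of ${}_1F_1$ moves with $\gamma$, and the constants in its asymptotic expansion are themselves $\gamma$-dependent. The paper resolves this by computing the relevant integral \emph{exactly}: Lemma \ref{formint} expresses the (off-diagonally regularized) integral of $\epsilon^{2\gamma-2\alpha}e^{2i\epsilon\tau}({}_1F_1)^2$ as an Appell $F_2$ function; Lemma \ref{rey} analytically continues $F_2$ to the boundary point $(1,1)$ of its convergence domain in terms of two ${}_3F_2(\cdots;1)$ series; and Stirling's formula then shows the resulting constant $C_{k}$ stays bounded (the series are asymptotic to $\gamma^{-1}$). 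Without this, or an equally quantitative uniform estimate on the hypergeometric functions, the sum over $k$ cannot be closed. Note also that the role of $\alpha>1/2$ is subtler than integrability at infinity: in the exact evaluation the factor $\Gamma(1-2\alpha)$ becomes singular as $\alpha\to1/2$ and, unlike at $\alpha=1$, this singularity is not cancelled upon taking real parts (Remark \ref{fail}), which is why the endpoint is excluded.
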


\begin{remark}
The dependence on the dimension of $\nu$, which will be neglected in the rest of the paper, has been added in the statement of the Theorem to allow us to remark the different ranges of admissible charges in dimensions $2$ and $3$. These ranges are in fact imposed by the condition on the Dirac Coulomb operator to be self-adjoint, which is indeed guaranteed in those intervals (see Remark \ref{rkself}). We stress also the fact that for $\nu$ approaching the endpoints, respectively $\pm1$ in $3D$ and $\pm1/2$ in $2D$, the range of admissible exponents for the lowest value of $k_n$ in estimate \eqref{mor} shrinks to zero. 
\end{remark}

\begin{remark}
It seems to be possible to generalize our proof of Theorem \ref{thmor} to any space dimension $n\geq 2$. As it will be clear, the key ingredients are given indeed by the construction of a suitable integral operator which transforms the problem into an ODE, and then by the careful analysis of some interaction integrals between generalized eigenstates of the Dirac-Coulomb equation that naturally appear. Therefore, the crucial step consists in finding the generalized eigenstates of the Dirac-Coulomb equation: this has been performed in \cite{gu-ma} in higher space dimension $n\geq4$ by generalizing the spherical wave decomposition (see Proposition \ref{thaller}), i.e. by carefully analyzing the symmetries of the $SO(n)$ group. Anyway, we prefer not to deal with this problem here both because technicalities would lead us too far and because of the scarce physical interest of it.
\end{remark}

\begin{remark}
The endpoint case $\alpha=1/2$ in  \eqref{mor} would permit to recover with a standard argument the full set of Strichartz estimates for the perturbed flow (see e.g. \cite{burq1}); even though we don't have a concrete counterexample, this estimate seems to fail. Forthcoming remark \ref{fail} gives more details on the issue.
\end{remark}

\begin{remark}\label{rkself}
In order to have a unitary flow for equation \eqref{diraccoul} one needs the operator $\mathcal{D}_n-\frac{\nu}{|x|}$ to be selfadjoint; many papers have been devoted to the study of this property  (see e.g. \cite{Schmincke-72}, \cite{Wust-73}, \cite{nenciu}, \cite{Klaus-Wust-78}, \cite{estlos}, \cite{estloss2}, \cite{arrveg}) in the $3D$ setting (note that the methods introduced in these papers are quite general and can be adapted to other dimensions). In $3D$ the essential self-adjointness is in fact guaranteed if $|\nu|\leq\frac{\sqrt{3}}2$, while in the interval $\frac{\sqrt3}2<|\nu| \leq 1$ it is still possible to build a distinguished selfadjoint extension. In the $2D$ case, as soon as $\nu$ is nonzero, the Dirac-Coulomb operator $\mathcal{D}_2-\frac{\nu}{|x|}$ defined on $C^\infty_0(\mathbb{R}^2)$ is not essentially selfadjoint (this is an immediate consequence of Theorem 4.16 in \cite{thaller}). But C. Warmt (\cite{wa}, Satz 2.2.6) recently proved the existence of a distinguished self-adjoint extension if $|\nu| < 1/2$. Therefore, in the range of $\nu_n$ for which we prove our Theorem \ref{thmor}, the operator $\mathcal{D}_n-\frac{\nu}{|x|}$ is self-adjoint (in the sense just discussed).
\end{remark}

The plan of the paper is the following: in section \ref{partial} we build the setup, reviewing the theory of partial wave subspaces and the spectrum of the Dirac-Coulomb operator in $2D$ and $3$D; then we define our analogue of the Hankel transform used in \cite{burq1}.
Section \ref{proof} is devoted to the proof of Theorem \ref{thmor}.
A short appendix containing some generalities on special functions  is included at the end for the reader's convenience.
\\\\
\textbf{Acknowledgments.} The first named author is supported by the FIRB 2012 "Dispersive dynamics, Fourier analysis and variational methods" funded by MIUR (Italy). The second named author acknowledges support from the ANR ``NoNAP'' of the french ministry of research.

\section{The setup.}\label{partial}

We devote this section to recall some classical facts about the radial Dirac operator and his spectrum, including a short discussion on how to explicitly calculate the generalized eigenstates of the continuous spectrum. For the sake of brevity we will omit several details, referring to \cite{landlif2} and \cite{thaller} for a complete discussion and for more informations on the topic. Also, we construct an integral transform that will be crucial in our proof.

\subsection{Partial wave decomposition and continuous spectrum}
The main ingredient we need to introduce is the so called \emph{partial wave decomposition}. We collect in Proposition \ref{thaller} the tools we will need in what follows that is, essentially, the fact that the Dirac-Coulomb operator can be seen as a radial operator with respect to some suitable decomposition, both  in dimensions $2$ and $3$. We will denote for brevity with $\mathcal{A}_2$ and $\mathcal{A}_3$ the following families of indices
\begin{equation}\label{j2}
\mathcal{A}_2:=\left\{k\in\mathbb{Z}+\frac12\right\};
\end{equation}
\begin{equation}\label{j3}
\mathcal{A}_3:=\left\{j=\frac12,\frac32,\dots, \: m_{j}=-j,-j+1,\dots,+j, \: k_{j}=\pm(j+1/2)\right\}
\end{equation}
Moreover, we define the $4$-vectors
\begin{equation}\label{Xi}
\Xi^+_{m_j,\mp(j+1/2)}=
\left(\begin{array}{cc}i \Omega^{m_j}_{j\mp1/2}\\
0\end{array}\right),\qquad
\Xi^-_{m_j,\mp(j+1/2)}=
\left(\begin{array}{cc}0\\ \Omega^{m_j}_{j\pm1/2},
\end{array}\right)
\end{equation}
where
\begin{equation*}
\Omega^{m_j}_{j-1/2}=\displaystyle\frac{1}{\sqrt{2j}}
\left(\begin{array}{cc}\sqrt{j+m_j}\:Y^{m_j-1/2}_{j-1/2}\\
\sqrt{j-m_j}\:Y^{m_j+1/2}_{j-1/2}
\end{array}\right)
\end{equation*}
\begin{equation*}
\Omega^{m_j}_{j+1/2}=\displaystyle\frac{1}{\sqrt{2j+2}}
\left(\begin{array}{cc}
\sqrt{j+1-m_j}\:Y_{j+1/2}^{m_j-1/2}\\
-\sqrt{j+1-m_j}\:Y_{j+1/2}^{m_j+1/2}
\end{array}\right)
\end{equation*}
and $Y^m_l$ are the usual spherical harmonics.

\begin{proposition}\label{thaller}
Let $n=2,3$. Then, we can define unitary isomorphisms between the Hilbert spaces 
$$L^2(\mathbb{R}^2)^{2} \cong L^2((0,\infty),dr)\bigotimes_{k\in\mathcal{A}_2} h^2_k,$$
$$L^2(\mathbb{R}^3)^{4} \cong L^2((0,\infty),dr)\bigotimes_{{j,m_j,k_j}\in\mathcal{A}_3} h^3_{j,m_j,k_j}$$
by the following decompositions
\begin{equation}\label{iso2}
\Phi_2(x)=\sum_{k\in\mathcal{A}_2}\frac{1}{2\sqrt{\pi}}\left(\begin{array}{cc} F_{k}(r)e^{i(k-1/2)\theta}\\
 G_{k}(r)
e^{i(k+1/2)\theta}
\end{array}\right),
\end{equation}
\begin{equation}\label{iso3}
\Phi_3(x)=\sum_{j,m_j,k_j\in\mathcal{A}_3}\frac{1}{2\sqrt{\pi}}\left(\begin{array}{cc} F_{j,m_j,k_j}(r)\Xi^+_{j,m_j,k_j}(\theta,\phi)\\
 G_{j,m_j,k_j}(r)
\Xi^-_{j,m_j,k_j}(\theta,\phi)
\end{array}\right)
\end{equation}
which hold for any $\Phi_n\in L^2(\mathbb{R}^n,\mathbb{C}^N)$.
 Moreover, the Dirac-Coulomb operator leaves invariant the partial wave subspaces $C^\infty_0((0,\infty))\otimes h^n_{k}$ and, with respect to the basis $\left\{e^{i(k-1/2)\theta},e^{i(k+1/2)\theta}\right\}$ and $\left\{\Xi^+_{j,m_j,k_j},\Xi^-_{j,m_j,k_j}\right\}$ is respectively represented by the radial matrices
\begin{equation}\label{raddir}
\displaystyle
d^2_{k}=\left(\begin{array}{cc}-\frac{\nu}{r}& -\frac{d}{dr}+\frac{k}{r}
\\  \frac{d}{dr}+\frac{k}{r} & -\frac{\nu}{r}\end{array}\right),\qquad
d^3_{j,m_j,k_j}=\left(\begin{array}{cc}-\frac{\nu}{r}& -\frac{d}{dr}+\frac{1+k_j}{r}
\\  \frac{d}{dr}-\frac{1-k_j}{r} & -\frac{\nu}{r}\end{array}\right).
\end{equation}

The Dirac-Coulomb opeator $\mathcal{D}_n-\frac{\nu}{r}$ on $C^\infty_0(\mathbb{R}^n)^N$ is unitary equivalent to the direct sum of "partial wave Dirac operators" $d_{j,m_j,k_j}$, 
\begin{equation*}
\mathcal{D}_2-\frac{\nu}{r}\cong \bigoplus_{k}d_{k}^2,\qquad
\mathcal{D}_3-\frac{\nu}{r}\cong \bigoplus_{j,m_k,k_k}d^3_{j,m_k,k_k}
\end{equation*}
For fixed $\overline{k}_2\in B(k_2)$ and $\overline{k}_3\in B(k_3)$ we denote with
\begin{equation}\label{HH}
\mathcal{H}_{\geq \overline{k}_2}^2=\bigoplus_{k:|k|\geq|\overline{k}_2|}h_{k},\qquad
\mathcal{H}_{\geq \overline{k}_3}^3=\bigoplus_{j:j\leq|k_3|-1/2,m_{j},k_{j}:}h_{j,m_{j},k_{j}}.
\end{equation}
\end{proposition}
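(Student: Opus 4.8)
The statement is, in essence, a reformulation of classical material (see \cite{thaller}, Section 4.6, in dimension $3$, together with its two--dimensional analogue), so the plan is to recall the angular calculus that underlies it and then carry out the radial reduction in the normalisation used here. First I would isolate the angular structure. In dimension $n=3$ the spin--orbit operator $K:=\beta\,(\Sigma\cdot\mathbf L+\mathbb I_4)$, with $\Sigma=\mathrm{diag}(\sigma,\sigma)$ and $\mathbf L=-i\,x\times\nabla$, commutes with $\mathcal D_3$ and, being purely angular, also with multiplication by $1/|x|$; hence it commutes with $\mathcal D_3-\nu/|x|$. Its eigenvalues are exactly $k_j=\pm(j+\tfrac12)$, $j=\tfrac12,\tfrac32,\dots$, and together with $\mathbf J^2$ and $J_3$ it admits a complete orthonormal system of eigenfunctions on $S^2$ --- the spinor spherical harmonics $\Omega^{m_j}_{j\mp1/2}$ of \eqref{Xi} --- so that for each admissible triple the four--spinors $\Xi^{\pm}_{j,m_j,k_j}$ span a two--dimensional angular subspace, which is $h^3_{j,m_j,k_j}$. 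In dimension $n=2$ the analogue is the scalar operator $J=-i\partial_\theta+\tfrac12\sigma_3$, whose spectrum is $\mathbb Z+\tfrac12$ and whose eigenspinors are proportional to $(e^{i(k-1/2)\theta},0)$ and $(0,e^{i(k+1/2)\theta})$, $k\in\mathcal A_2$; these span $h^2_k$.

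Next I would set up the unitary isomorphism. Expanding an arbitrary $\Phi_n\in L^2(\mathbb R^n)^N$ in this angular basis with $r$--dependent coefficients and using Parseval on $S^{n-1}$ shows that the map $\Phi_n\mapsto(F_\bullet,G_\bullet)$ is an isometry onto $\bigoplus_k L^2((0,\infty),r^{n-1}dr)\otimes h^n_k$; absorbing the factor $r^{(n-1)/2}$ into the radial coefficients (this is the origin both of the constant $\tfrac1{2\sqrt\pi}$ and of the powers of $r$ hidden in $F,G$) turns the measure $r^{n-1}dr$ into $dr$ and yields \eqref{iso2}--\eqref{iso3} together with the claimed identification of Hilbert spaces; surjectivity and injectivity are immediate from completeness of the angular system (classical theory of spherical harmonics, resp. Fourier series on $S^1$).

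Then I would compute the radial operator. The key point is the well known splitting $\alpha\cdot\nabla=(\alpha\cdot\hat x)\bigl(\partial_r+\tfrac{n-1}{2r}-\tfrac1r\widetilde K\bigr)$ of the Dirac operator into a radial derivative and an angular part $\widetilde K$ (namely $\sigma\cdot\mathbf L+1$ in $3D$ and its $2D$ counterpart), in which $\alpha\cdot\hat x$, resp. $\sigma\cdot\hat x$, exchanges the two angular basis spinors --- $(\sigma\cdot\hat x)\,\Omega^{m_j}_{j\mp1/2}=-\,\Omega^{m_j}_{j\pm1/2}$ and the $2D$ analogue --- while $\widetilde K$ acts on each of them as multiplication by the corresponding eigenvalue. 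Conjugating by $r^{(n-1)/2}$ removes the $\tfrac{n-1}{2r}$ term; inserting the angular expansion and using the intertwining relations then turns $\mathcal D_n$ restricted to $C^\infty_0((0,\infty))\otimes h^n_k$ into a $2\times2$ radial matrix, and adding the diagonal $-\nu/r$ produces precisely $d^n_k$ as in \eqref{raddir}. In particular the partial wave subspaces are invariant, whence $\mathcal D_n-\nu/|x|\cong\bigoplus_k d^n_k$; and \eqref{HH} is merely a notation, for which there is nothing to prove.

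The main --- and essentially the only non--bookkeeping --- obstacle is the angular calculus: one must verify, with the present sign, ordering and phase conventions (note the explicit $i$ in the definition of $\Xi^+$ in \eqref{Xi}, which is there exactly for this purpose), the two families of identities $(\Sigma\cdot\mathbf L+\mathbb I)\,\Omega^{m_j}_{j\mp1/2}=\pm(j+\tfrac12)\,\Omega^{m_j}_{j\mp1/2}$ and $(\sigma\cdot\hat x)\,\Omega^{m_j}_{j\mp1/2}=-\,\Omega^{m_j}_{j\pm1/2}$ (together with their $2D$ analogues), so that the off--diagonal entries of the radial matrices \eqref{raddir} come out exactly as written; these follow from the Clebsch--Gordan relations for the coupling of an orbital angular momentum with spin $\tfrac12$, but require some care. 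A further subtlety, special to $n=2$: since $\mathcal D_2-\nu/|x|$ is not essentially self--adjoint on $C^\infty_0(\mathbb R^2)$ (Remark \ref{rkself}), the unitary equivalence with $\bigoplus_k d^2_k$ must be read at the level of the symmetric operator and of its distinguished self--adjoint extension, and one should check that the decomposition is compatible with that choice; the same remark applies in $3D$ for $\tfrac{\sqrt3}2<|\nu|\le1$.
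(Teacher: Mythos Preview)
Your sketch is correct and follows exactly the classical route the paper has in mind: the paper's own ``proof'' is simply the one-line citation ``See \cite{thaller}'', and you have faithfully unpacked the argument from Section~4.6 of that reference (angular operator $K$ commuting with the Dirac--Coulomb operator, spinor spherical harmonics as joint eigenfunctions, the radial/angular splitting of $\alpha\cdot\nabla$, and conjugation by $r^{(n-1)/2}$), together with its $2D$ analogue. Your additional remarks on phase/sign conventions and on the self-adjointness subtlety are welcome and do not depart from the intended approach.
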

\begin{proof}
See \cite{thaller}.
\end{proof}

\begin{remark}\label{rknot}
Notations in Proposition \ref{thaller} are very heavy; this is partly due to the richness of the Dirac operator itself and partly  to the fact that the spherical coordinates decomposition changes in dimensions $2$ and $3$. We should stress the fact that formulas \eqref{raddir} only depend on $k$: therefore, in what follows we will abuse notations neglecting the dependence on the other parameters in the case $n=3$ in order to have a unified presentation. 

\end{remark}

We now show how the functions of the continuous spectrum for the Dirac-Coulomb operator are obtained. We want to deal with the eigenvalue equation
\begin{equation}\label{eigenpro}
\left(\mathcal{D}_n-\frac{\nu}{|x|}\right)\Psi^n=\varepsilon\Psi^n,\qquad \varepsilon>0,\quad n=2,3.
\end{equation}
The application of Proposition \ref{thaller} suggests to seek for solutions in the form, for a fixed value of $\varepsilon$,
\begin{equation}\label{eigpsi2}
\Psi_{k,\varepsilon}^2(r,\hat{x})=
\left(\begin{array}{cc}F^2_{k,\varepsilon}(r)e^{i(k-1/2)\theta}\\\:G^2_{k,\varepsilon}(r)e^{i(k+1/2)\theta}
\end{array}\right),\qquad n=2
\end{equation}
\begin{equation}\label{eigpsi3}
\Psi_{k,\varepsilon}^3(r,\hat{x})=
\left(\begin{array}{cc}F^3_{k,\varepsilon}(r)\Xi^+_{k}(\hat{x})\\\:G^3_{k,\varepsilon}(r)\Xi^-_{k}(\hat{x})
\end{array}\right),\qquad n=3
\end{equation}
so that equation \eqref{eigenpro} will in fact reduce to a sytem of ODE for the radial components $F^n_k(r)$, $G^n_k(r)$ which is the same in both dimensions, apart for some constants, and can be expliticly solved (for the details on these calculations we refer to \cite{landlif2} section IV for the $3D$ and to \cite{nov} for the $2D$ case). We thus have the following formulas, for fixed values of $k$ and $\varepsilon>0$, and $n=2,3$,
\begin{equation}\label{eigen1}
F^n_{k,\varepsilon}=\frac{\sqrt{2}|\Gamma(\gamma+1+i\nu)|}{\Gamma(2\gamma+1)}e^{\pi\nu/2}(2\varepsilon r)^{\gamma-\frac{(n-1)}2}
 {\rm Re}
\left\{e^{i(\varepsilon r+\xi)}
{_1F_1}(\gamma-i\nu,2\gamma+1,-2i\varepsilon r)\right\}
\end{equation}
\begin{equation}\label{eigen2}
G^n_{k,\varepsilon}=\frac{i\sqrt{2}|\Gamma(\gamma+1+i\nu)|}{\Gamma(2\gamma+1)}e^{\pi\nu/2}(2\varepsilon r)^{\gamma-\frac{(n-1)}2}
 {\rm Im}
\left\{e^{i(\varepsilon r+\xi)}
{_1F_1}(\gamma-i\nu,2\gamma+1,-2i\varepsilon r)\right\}
\end{equation} 
where $_1F_1(a,b,z)$ are \emph{confluent hypergeometric functions} (see Appendix for the definition and some properties), $\gamma=\sqrt{k^2-\nu^2}$  and $e^{-2i\xi}=\frac{\gamma-i\nu}{k}$ is a phase shift. We will denote with
\begin{equation}\label{radeigpsi}
\psi_{k,\varepsilon}^n(r)=
\left(\begin{array}{cc}F^n_{k,\varepsilon}(r)\\\:G^n_{k,\varepsilon}(r)
\end{array}\right),\qquad n=2,3
\end{equation}
the vector of radial coordinates of the generalized eigenfunctions.

Notice that with this choice the following normalization condition holds for \eqref{eigpsi2}-\eqref{eigpsi3}
\begin{equation}\label{norm}
\int_0^{+\infty}\overline{\psi^n}_\epsilon(r)\psi^n_{\epsilon'}(r)r^{n-1}dr=\frac{\delta(\epsilon-\epsilon')}{\epsilon^{n-1}},\qquad n=2,3
\end{equation}
meaning with that that the function
$$
K(r,r')=r^{n-1}\int e^{-l\varepsilon}\overline{\psi^n}_\epsilon(r)\psi^n_{\epsilon'}(r)\varepsilon^{n-1}d\varepsilon
$$
is a $\delta$-approximation for $l\rightarrow0$.

\begin{remark}
We point out that, due to our choice of normalization \eqref{norm}, the homogeneity of the functions $\psi^n(r)$ with respect to $\varepsilon$ and $r$ is the same; this fact, that is not true anymore in the massive case, will be crucial in what follows, and therefore prevents the application of our strategy to the massive case.
\end{remark}

\begin{remark}
The sign of the Coulomb potential plays in fact a virtually unessential role in what is above, and the wave functions for repulsive fields can be obtained simply by changing the sign of $\nu$ in \eqref{eigen1}-\eqref{eigen2}.
\end{remark}

\begin{remark}\label{negative}
Formulas \eqref{eigen1}-\eqref{eigen2} give the "eigenstates" of the continuous spectrum corresponding to positive energies $\varepsilon>0$. Using a charge conjugation argument (or directly working on the explicit radial equations), it is possible to obtain the corresponding formulas for the radial components \eqref{eigpsi2} for negative energies: it turns indeed out that the function $\psi_{-k,\varepsilon}(r)$ solves
\begin{equation}\label{negativeig}
\tilde{d}_k\psi_{-k,\varepsilon}(r)=-\varepsilon \psi_{-k,\varepsilon},\qquad n=2,3,
\end{equation}
where the operator $\tilde{d}_k$ is the one defined in \eqref{raddir} with opposite sign of the charge $\nu$. This shows that the "eigenstates" corresponding to negative energies for equation \eqref{eigenpro} can be understood by mean of \eqref{negativeig} to be "eigenstates" corresponding to positive energies, with opposite value of $j$ and opposite charge. Therefore, denoting with $\tilde{F}$, $\tilde{G}$ the functions obtained by \eqref{eigen1}-\eqref{eigen2} by changing the sign of $\nu$, we obtain for the radial coordinates of the "eigenstates" corresponding to negative energies as in \eqref{radeigpsi} the following relation
$$
\psi^n_{k,-\varepsilon}(r)=
\left(\begin{array}{cc}F^n_{k,-\epsilon}(r)\\\:G^n_{k,-\epsilon}(r)\end{array}\right)=
\left(\begin{array}{cc}\tilde{F}^n_{-k,\epsilon}(r)\\\:\tilde{G}^n_{-k,\epsilon}(r)
\end{array}\right).
$$

\end{remark}

\subsection{The integral transform}
We now introduce the crucial integral transform that will be used in the proof of the main result, consisting in a projection on the continuous spectrum of the Dirac-Coulomb operator. Throughout this subsection we will omit the dependance on the dimensions of the functions, as the calculations carried out will be the same.
\begin{definition}\label{hanktras}
Let $\Phi\in L^2((0,\infty),dr)\otimes h_{k}$ for some fixed $k$ and let $\varphi(r)=(\varphi_1(r),\varphi_2(r))$ be the vector of its radial coordinates in decomposition \eqref{iso2}-\eqref{iso3}.  We define the following integral transform
\begin{equation}\label{H}
\mathcal{P}_k\varphi(\epsilon)=
\left(\begin{array}{cc}\mathcal{P}^+_k\varphi(\epsilon)\\
\mathcal{P}^-_k\varphi_k(\epsilon)
\end{array}\right)=
\left(\begin{array}{cc}\int_0^{+\infty}\psi_{k,\varepsilon}(r)\varphi(r)r^{n-1}dr\\
\mathcal{C}\left(\int_0^{+\infty}\psi_{k,-\varepsilon}(r)\varphi(r)r^{n-1}dr\right)
\end{array}\right)
\end{equation}
\begin{equation*}
=\int_0^{+\infty}H_{k}(\varepsilon r)\cdot\varphi(r)r^{n-1}dr
\end{equation*}
where we have introduced the matrix (keep in mind Remark \ref{negative})
\begin{equation}\label{matra}
H_{k}=\left(\begin{array}{cc}F_{k,\varepsilon}(r)\;G_{k,\varepsilon}(r)\\
F_{k,-\varepsilon}(r)\;G_{k,-\varepsilon}(r)
\end{array}\right)
\end{equation}
\end{definition}

We collect in the following proposition some important properties of the operator $\mathcal{P}_k$.
\begin{proposition}\label{properties}
For $n=2,3$ and any $\Phi\in L^2((0,\infty),dr)\otimes h_{k}$ the following properties hold:
\begin{enumerate}
\item
$\mathcal{P}_k$ is an $L^2$-isometry.
\item
$
\mathcal{P}_kd_k=\sigma_3\Omega\mathcal{P}_k.
$
\item
The inverse transform of $\mathcal{P}_k$ is given by
\begin{equation}\label{H-1}
\mathcal{P}_k^{-1}\varphi(r)=\int_0^{+\infty}H_{k}^{*}(\epsilon r)\cdot\varphi(\epsilon)\epsilon^{n-1}d\epsilon
\end{equation}
where $H_{k}^{*}=\left(\begin{array}{cc}F_{k,\varepsilon}(r)\;G_{k,-\varepsilon}(r)\\
G_{k,\varepsilon}(r)\;F_{k,-\varepsilon}(r)
\end{array}\right)
$.
\item
For every $\sigma\in\mathbb{R}$ we can define the fractional operators
\begin{equation}\label{fractiondef}
A_k^\sigma\varphi_k(r)=\mathcal{P}_k\sigma_3\Omega^\sigma\mathcal{P}_k^{-1}\varphi_k(r)=\int_0^{+\infty}S_k^\sigma(r,s)\cdot \varphi_k(s)s^{n-1}ds.
\end{equation}
where the integral kernel $S_k(r,s)$ is the $2\times2$ matrix given by
\begin{equation}\label{mattrix}
S_k(r,s)=\int_0^{+\infty}H_{k}(\epsilon r)\cdot H_{k}^{*}(\epsilon s)\epsilon^{n-1+\alpha}d\epsilon
\end{equation}
\end{enumerate}
\end{proposition}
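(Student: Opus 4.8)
The plan is to derive all four items from the spectral resolution of the self\-adjoint radial operator $d_k$, whose spectrum (simple, absolutely continuous, filling all of $\mathbb{R}$) is carried by the generalized eigenfunctions $\psi_{k,\pm\varepsilon}$ sitting in the rows of $H_k$. The analytic inputs I would establish first are the \emph{orthogonality} and \emph{completeness} relations, which in the matrix form dictated by Definition \ref{hanktras} read
\begin{equation*}
\int_0^{+\infty} H_k(\varepsilon r)\,H_k^{*}(\varepsilon' r)\, r^{\,n-1}\,dr=\frac{\delta(\varepsilon-\varepsilon')}{\varepsilon^{\,n-1}}\,\mathbb{I}_2,
\qquad
\int_0^{+\infty} H_k^{*}(\varepsilon r)\,H_k(\varepsilon s)\,\varepsilon^{\,n-1}\,d\varepsilon=\frac{\delta(r-s)}{r^{\,n-1}}\,\mathbb{I}_2 .
\end{equation*}
The first refines the scalar normalization \eqref{norm} and, through its off\-diagonal entries, records the mutual orthogonality of the positive\- and negative\-energy branches; this is precisely where the antilinear charge conjugation $\mathcal{C}$ of Definition \ref{hanktras}, and hence the exact shape of $H_k^{*}$, is needed in order to put the two branches on the same footing. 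The second is the completeness statement: it amounts to the absence of point or singular\-continuous spectrum for the self\-adjoint realization of $d_k$, and I would either quote it from the Weyl–Titchmarsh/Weidmann theory of singular one\-dimensional Dirac systems — using that, in the admissible range of $\nu$, the endpoints $0$ and $\infty$ are of limit\-point type, i.e.\ $d_k$ is the distinguished self\-adjoint extension of Remark \ref{rkself} — or derive it by hand from the asymptotics of ${}_1F_1$ collected in the Appendix, as in the classical treatment of the Coulomb problem in \cite{landlif2}.

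Granting these, items (1) and (3) are immediate. For (1) one expands $\|\mathcal{P}_k\varphi\|_{L^2(\varepsilon^{\,n-1}d\varepsilon)}^2$, inserts the orthogonality relation and integrates out one radial variable, obtaining $\|\mathcal{P}_k\varphi\|^2=\|\varphi\|^2$. For (3), the operator defined by \eqref{H-1} satisfies, after Fubini and the completeness relation,
\begin{equation*}
\mathcal{P}_k^{-1}\mathcal{P}_k\varphi(r)=\int_0^{+\infty}\Big[\int_0^{+\infty}H_k^{*}(\varepsilon r)\,H_k(\varepsilon s)\,\varepsilon^{\,n-1}\,d\varepsilon\Big]\varphi(s)\,s^{\,n-1}\,ds=\varphi(r),
\end{equation*}
and symmetrically $\mathcal{P}_k\mathcal{P}_k^{-1}=\mathrm{Id}$ from the orthogonality relation, so that \eqref{H-1} is a genuine two\-sided inverse.

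For (2) I would move $d_k$ onto the kernel by integration by parts: for $\varphi\in C^\infty_0((0,\infty))\otimes h_k$ the boundary terms vanish because of the behaviour $r^{\gamma-(n-1)/2}$ of $\psi_{k,\pm\varepsilon}$ at the origin ($\gamma=\sqrt{k^2-\nu^2}>0$) together with their oscillatory decay at infinity, so $d_k$ ends up acting on the first factor; there it is diagonalized by the eigenvalue equations $d_k\psi_{k,\varepsilon}=\varepsilon\psi_{k,\varepsilon}$ and, by Remark \ref{negative}, $d_k\psi_{k,-\varepsilon}=-\varepsilon\psi_{k,-\varepsilon}$, i.e.\ it becomes multiplication by $\varepsilon\sigma_3$, whence
\begin{equation*}
\mathcal{P}_k(d_k\varphi)(\varepsilon)=\varepsilon\,\sigma_3\,(\mathcal{P}_k\varphi)(\varepsilon)=\sigma_3\Omega\,\mathcal{P}_k\varphi(\varepsilon).
\end{equation*}
Finally, (4) is a formal manipulation on the dense set of $\varphi$ for which all integrals converge absolutely: substituting \eqref{H} and \eqref{H-1} into $A_k^\sigma=\mathcal{P}_k\sigma_3\Omega^\sigma\mathcal{P}_k^{-1}$, interchanging the radial and spectral integrations by Fubini, and collecting the remaining $\varepsilon$\-integral — legitimate precisely because, as stressed after \eqref{radeigpsi}, $\psi_{k,\varepsilon}(r)$ depends on $\varepsilon$ and $r$ only through the product $\varepsilon r$ with a common homogeneity — one reads off the integral operator with kernel $S_k^\sigma$ of the form \eqref{mattrix}. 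The mapping properties of $A_k^\sigma$ for $\sigma$ in the range relevant to Theorem \ref{thmor} are not claimed here and are the object of the estimates on $S_k^\sigma$ carried out in Section \ref{proof}.

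I expect the single genuinely delicate point to be the completeness half of this spectral resolution, feeding (1) and (3): showing that the explicit family $\{\psi_{k,\pm\varepsilon}\}$ really exhausts the spectrum of the self\-adjoint realization of $d_k$ — no missing bound states, no singular part — is where the self\-adjointness theory of the Dirac–Coulomb operator and the endpoint restrictions on $\nu$ (Remark \ref{rkself}) genuinely enter; everything else reduces to bookkeeping with confluent\-hypergeometric asymptotics, integration by parts, and Fubini.
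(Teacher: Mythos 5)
Your argument follows the same route as the paper's (very terse) proof: orthogonality/completeness of the generalized eigenfunctions for items (1) and (3), symmetry of $d_k$ together with the eigenvalue relations $d_k\psi_{k,\pm\varepsilon}=\pm\varepsilon\,\psi_{k,\pm\varepsilon}$ for item (2), and Fubini for item (4). You are in fact more careful than the paper, which only invokes the scalar normalization \eqref{norm} and leaves implicit the matrix-valued completeness relation that is actually needed for the isometry and for $\mathcal{P}_k^{-1}\mathcal{P}_k=\mathrm{Id}$; your isolation of that relation as the one genuinely delicate input (and the one place where self-adjointness and the range of $\nu$ enter) is exactly right, modulo the small slip of calling it the ``orthogonality'' relation when you expand $\|\mathcal{P}_k\varphi\|^2$.
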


\begin{remark}
Property $(1)$ allows, by standard arguments, to extend the definition of the operator $\mathcal{P}_k$ to functions in $L^2$.
\end{remark}

\begin{remark}\label{fractpow}
When summing on $k$, property \eqref{fractiondef} defines in a standard way fractional powers of the operator $|\mathcal{D}_n-\frac{\nu}{|x|}|$, which are used in the statement of Theorem \eqref{thmor}.
\end{remark}

\begin{proof}Property $(1)$ and $(2)$ come from the definition of $\mathcal{P}_k$, once noticed that
$$
\mathcal{P}^\pm_k(d_k\varphi)=\langle \psi_{k,\pm\epsilon},d_k\varphi\rangle=\langle d_k \psi_{k,\pm\epsilon},\varphi\rangle=\pm\epsilon\langle  \psi_{k,\pm\epsilon},\varphi\rangle
$$
and from normalization relation \eqref{norm}.
Property $(3)$ is a consequence of $(1)$ ($H^*_{k} $ is the adjoint of $H_{k}$).

To prove property $(4)$ we use the definition of $A_k$ to write
\begin{eqnarray*}
A_k^\sigma\varphi_k(r)&=&\mathcal{P}_k\Omega^\sigma\mathcal{P}_k^{-1}\varphi_k(r)
\\
&=&
\int_0^{+\infty}H_{k}(\epsilon r)\epsilon^{n-1+\sigma}\left(\int_0^{+\infty}H^*_{k}(\epsilon s)\varphi_k(s)s^{n-1}ds\right)d\epsilon.
\end{eqnarray*}
Exchanging the order of the integrals yields $(4)$.
%
\end{proof}

\section{Proof of Theorem \ref{thmor}}\label{proof}

We here prove our main result. Following \cite{burq1}, we rely on partial wave decomposition and on Proposition \ref{properties} to explicitly write the solutions on the single subspaces; then, a detailed and careful analysis of the kernel $S_k(r,s)$ of the operator $A_k$ defined in \eqref{fractiondef} will be performed to sum back.

\subsection{The strategy}
We start with decomposition \eqref{iso2}-\eqref{iso3} and work on a single spherical space which will be identified by $k\in\mathcal{A}_2$ as in \eqref{j2} and by the triple $\{j,m_k,k_j\}\in\mathcal{A}_3$ as in \eqref{j3} in the case $n=3$. With a slight abuse of notations, we will again neglect the dependence on everything but $k$ in the  $3D$ case (as noticed in Remark \ref{rknot}, the radial Dirac-Coulomb operator in dimension $3$ only depends on $k=k_j$, see \eqref{raddir}), as well as the dependence on the dimension for the various objects, in order to perform much readable and unified calculations. We thus rely on Proposition \ref{thaller} aiming to prove estimate \eqref{mor} on a fixed partial wave subspace with a constant $C$ bounded with respect to $k$, and eventually sum back. We thus take an initial condition $f$ with angular part in $h_k$ for some fixed $k$, and denote with $L_k f$ the solution to the initial value problem
\begin{equation}\label{diraccoulmod}
\begin{cases}
\displaystyle
 iu_t+d_ku=0,\\
u(0,x)=f(x)
\end{cases}
\end{equation}
with $d_k$ as in \eqref{raddir}.
We apply operator $\mathcal{P}_k$, which is an $L^2$-isometry, to the LHS side of estimate \eqref{mor} and use Proposition \ref{properties} to obtain (the application of the matrix $\sigma_3$ does not alter the $L^2$ norm)
\begin{equation*}
\|\mathcal{P}_k\Omega^{-\alpha}|d_k|^{1/2-\alpha}L_k f\|_{L^2_tL^2_x}=\|A_k^{-\alpha}\Omega^{1/2-\alpha}\mathcal{P}_kL_kf\|_{L^2_tL^2_x}
\end{equation*}
where we have used \eqref{fractiondef}. Now, $\mathcal{P}_kL_kf$ solves (see $(2)$ of Proposition \ref{thaller})
\begin{equation}\label{diraccoulbis}
\begin{cases}
\displaystyle
 i\partial_t\mathcal{P}_kL_kf+\sigma_3\Omega \mathcal{P}_kL_kf=0,\\
\mathcal{P}_kL_kf(0,\xi)=\mathcal{P}_kf(\xi),
\end{cases}
\end{equation}
so that the solution to this problem is explicitly given by
\begin{equation*}
\mathcal{P}_kL_kf(t,\xi)=e^{it\sigma_3\xi}\mathcal{P}_kf(\xi).
\end{equation*}
Fourier transforming in time (since $\mathcal{F}_{t\rightarrow\tau}$ is an $L^2$-isometry which commutes with $A_k^\sigma$) gives (with a minor abuse, we will neglect the $\sigma_3$ matrix from now on as we will only need to pick the $L^2$ norm and thus the sign will be unessential)
\begin{equation*}
(\mathcal{F}_t\mathcal{P}_kL_kf)(\tau,\xi)=(\mathcal{P}_kf)(\rho)\delta(\tau+\xi).
\end{equation*}

We can then write 
\begin{eqnarray*}
(A_k^{-\alpha}\Omega^{1/2-\alpha}\mathcal{F}_t\mathcal{P}_kL_kf)(\tau,\xi)&=&
\int_0^{+\infty}S_k^{-\alpha}(\xi,s)\delta(\tau+s)\mathcal{P}_kf(s)s^{\frac{2n-2\alpha-1}2}ds
\\
&=&
-S_k^{-\alpha}(\xi,\tau)\mathcal{P}_kf(\tau)\tau^{\frac{2n-2\alpha-1}2}.
\end{eqnarray*}
In view of proving \eqref{mor} we now need to calculate the $L^2$ norm in time and space of the quantity above which then gives (notice that, as the angular part in decomposition \eqref{iso2}-\eqref{iso3} is $L^2$-unitary, we only need to consider the radial integrals)
\begin{equation}\label{1}
\int_0^{+\infty}\int_0^{+\infty}((\mathcal{P}_kf)^*(\tau)S_k^{-\alpha}(\rho,\tau)^T)\cdot(S_k^{-\alpha}(\rho,\tau)(\mathcal{P}_kf)(\tau))\tau^{2n-2\alpha-1}\rho^2 d\rho.
\end{equation}
Since $S_k^p(\rho,\tau)^T=S_k^p(\tau,\rho)$, the integral in $d\rho$ yields $S_k^{-2\alpha}(\tau,\tau)$ and we are therefore left with
\begin{equation}\label{2}
\int_0^{+\infty}(\mathcal{P}_kf)^*(\tau)S_k^{-2\alpha}(\tau,\tau)(\mathcal{P}_kf)(\tau)\tau^{2n-2\alpha-1} \:d\tau.
\end{equation}

To conclude the proof of the estimate on the $k$-th space we need to bound \eqref{2} with the $L^2$-norm of $\mathcal{P}_kf$ (which we recall to be the $L^2$ norm of $f$). Using forthcoming Proposition \ref{tecint} we thus have
\begin{equation}\label{laststep}
\int_0^{+\infty}(\mathcal{P}_kf)^*(\tau)S_k^{-2\alpha}(\tau,\tau)(\mathcal{P}_kf)(\tau)\tau^{2n-2\alpha-1} \:d\tau
\end{equation}
\begin{eqnarray*}
&\leq& \int_0^{+\infty}{\rm Tr}(S_k^{-2\alpha}(\tau,\tau))|(\mathcal{P}_kf)(\tau)|^2\tau^{2n-2\alpha-1} \:d\tau
\\
&\leq& C_k \int_0^{+\infty}|(\mathcal{P}_kf)(\tau)|^2(\tau)\tau^{n-1}\:d\tau
\\
\nonumber
&=&C_k\|f\|_{L^2}.
\end{eqnarray*}
with $C_k$ a bounded constant of $k$. Expanding $f$ in partial waves and using the triangle inequality together with the orthogonality of spherical harmonics concludes the proof.

\subsection{The main integral}

The crucial step consists thus in evaluating
\begin{eqnarray}\label{trace}
\nonumber
{\rm Tr}(S^{-2\alpha}(\tau,\tau))&=&{\rm Re}\left[\int_0^{+\infty}F_{k,\varepsilon}(\tau)^2+
G_{k,\varepsilon}(\tau)^2\epsilon^{n-1-2\alpha} d\epsilon\right]
\\
&+&{\rm Re}\left[\int_0^{+\infty}F_{k,-\varepsilon}(\tau)^2+G_{k,-\varepsilon}(\tau)^2\epsilon^{n-1-2\alpha} d\epsilon\right]
\\
\nonumber
&=&{\rm Re}\big[I_1(\tau)+I_2(\tau)\big].
\end{eqnarray}
More than giving an explicit solution of such integrals, which seems to be possible but significantly complicated, we are more interested in proving the following
\begin{proposition}\label{tecint}
Let $n=2,3$ $k_n\in B(k)$ as defined in \eqref{B2}-\eqref{B3} and $\nu_n$ as in Theorem \ref{thmor}. Then for every $$1/2<\alpha<\sqrt{k_n^2-\nu_n^2}+1/2$$
there exists a constant $C_{k_n}$ such that 
$$
\rm{Tr}(S_k^{-2\alpha}(\tau,\tau))=C_{k_n} \tau^{-n+2\alpha}.
$$
Moreover,  $\displaystyle\sup_{k_n} |C_{k_n}|<\infty$
\end{proposition}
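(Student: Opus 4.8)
The plan is to pull out the power of $\tau$ by a scaling argument, thereby reducing the statement to the convergence and uniform-in-$k$ boundedness of a single numerical integral, and then to analyse that integral through the asymptotics of ${}_1F_1$.

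\textbf{Step 1: scaling.} The decisive structural feature of \eqref{eigen1}--\eqref{eigen2} is that $F^n_{k,\varepsilon}(r)$ and $G^n_{k,\varepsilon}(r)$ depend on $\varepsilon$ and $r$ only through the product $\varepsilon r$ (this is exactly the equal-homogeneity observation made after \eqref{norm}). Setting
$v_k(u):=C_\nu\,(2u)^{\gamma-(n-1)/2}e^{i(u+\xi)}\,{}_1F_1(\gamma-i\nu,2\gamma+1,-2iu)$ with $C_\nu=\sqrt2\,|\Gamma(\gamma+1+i\nu)|\,e^{\pi\nu/2}/\Gamma(2\gamma+1)>0$ and $\gamma=\sqrt{k_n^2-\nu_n^2}$, one reads off from \eqref{eigen1}--\eqref{eigen2} that $F^n_{k,\varepsilon}(r)^2+G^n_{k,\varepsilon}(r)^2=\mathrm{Re}\,v_k(\varepsilon r)^2$, and similarly $F^n_{k,-\varepsilon}(r)^2+G^n_{k,-\varepsilon}(r)^2=\mathrm{Re}\,\tilde v_k(\varepsilon r)^2$ with $\tilde v_k$ the charge-conjugate analogue (Remark \ref{negative}: built with $\nu\mapsto-\nu$, $k\mapsto-k$). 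Substituting $u=\varepsilon\tau$ in the integrals $I_1,I_2$ of \eqref{trace} gives $I_1(\tau)=\tau^{-n+2\alpha}\int_0^{+\infty}\mathrm{Re}\,v_k(u)^2\,u^{n-1-2\alpha}\,du$ and likewise for $I_2$, whence
\begin{equation*}
\mathrm{Tr}\,(S_k^{-2\alpha}(\tau,\tau))=C_{k_n}\,\tau^{-n+2\alpha},\qquad C_{k_n}:=\int_0^{+\infty}\mathrm{Re}\bigl(v_k(u)^2+\tilde v_k(u)^2\bigr)\,u^{n-1-2\alpha}\,du ,
\end{equation*}
which is exactly the claimed form, with $C_{k_n}$ independent of $\tau$. (Equivalently, the scaling can be seen abstractly: $A_k^{-2\alpha}$ is a fractional power of the radial Dirac--Coulomb operator $d_k$, homogeneous of degree $-1$ under dilations, so its kernel obeys $K(\lambda r,\lambda s)=\lambda^{2\alpha-n}K(r,s)$.) It remains to show that $C_{k_n}$ is finite and that $\sup_{k_n}|C_{k_n}|<\infty$.

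\textbf{Step 2: finiteness.} I would examine the two endpoints of the $u$-integral. As $u\to0^+$ one has ${}_1F_1(\gamma-i\nu,2\gamma+1,-2iu)=1+O(u)$, so $v_k(u)^2+\tilde v_k(u)^2=c_k\,(2u)^{2\gamma-(n-1)}(1+O(u))$ with a constant $c_k\neq 0$ when $\nu\neq0$; hence the integrand is $\asymp u^{2(\gamma-\alpha)}$ near $0$, integrable there precisely when $\alpha<\gamma+1/2=\sqrt{k_n^2-\nu_n^2}+1/2$, the upper bound in the hypothesis. As $u\to+\infty$, the large-argument asymptotics of the confluent hypergeometric function recalled in the Appendix give ${}_1F_1(\gamma-i\nu,2\gamma+1,-2iu)\sim\frac{\Gamma(2\gamma+1)}{\Gamma(\gamma+1+i\nu)}e^{i\pi(\gamma-i\nu)}(2iu)^{-(\gamma-i\nu)}$; the Gamma factors cancel against $C_\nu$, so that $|v_k(u)|\sim\sqrt2\,e^{3\pi\nu/2}\,(2u)^{-(n-1)/2}$ — notably $k$-independent — and the integrand is $O(u^{-2\alpha})$, absolutely integrable at infinity iff $\alpha>1/2$. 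Thus $C_{k_n}$ is finite exactly for $1/2<\alpha<\sqrt{k_n^2-\nu_n^2}+1/2$.

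\textbf{Step 3: uniformity in $k$, and the main obstacle.} As $|k|\to\infty$ one has $\gamma\to\infty$, and by Stirling $C_\nu^2=2e^{\pi\nu}|\Gamma(\gamma+1+i\nu)|^2/\Gamma(2\gamma+1)^2$ is super-exponentially small; consequently the near-origin contribution to $C_{k_n}$ tends to $0$, while by Step 2 the contribution from $u\gtrsim\gamma$ is controlled uniformly in $k$ (the eigenfunction tail being $k$-independent). The genuine difficulty — and the main obstacle — is the intermediate zone $u\sim\gamma$, the turning point of the effective radial potential, where $(2u)^{\gamma-(n-1)/2}|{}_1F_1|$ is largest and the super-exponentially small $C_\nu^2$ must exactly offset a super-exponentially large integral. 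I see two routes. \emph{(a)} Evaluate $C_{k_n}$ in closed form, writing $v_k^2$ through $\bigl({}_1F_1(\gamma-i\nu,2\gamma+1,-2iu)\bigr)^2$ (and its conjugate for $\tilde v_k$) and invoking a classical Weber--Schafheitlin-type evaluation of $\int_0^{+\infty}u^{s-1}e^{2iu}\,\bigl({}_1F_1(\gamma-i\nu,2\gamma+1,-2iu)\bigr)^2\,du$ as a combination of ratios of Gamma functions in $\gamma,\nu,\alpha$; Stirling then shows the product with $C_\nu^2$ stays bounded (indeed has a finite limit) as $\gamma\to\infty$. \emph{(b)} Split $(0,\infty)=(0,c)\cup(c,\gamma)\cup(\gamma,\infty)$ and bound each piece separately, using the power series near $0$, a uniform-in-$k$ bound on $(2u)^{\gamma-(n-1)/2}|{}_1F_1|$ on the algebraic-growth range $u\lesssim\gamma$ (via Tricomi-type uniform asymptotics or the integral representation of ${}_1F_1$), and the oscillatory asymptotics for $u\gtrsim\gamma$, carrying the Stirling bound on $C_\nu^2$ through each estimate. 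Route (a) is the shorter one if the integral identity is applicable; in either case one obtains $\sup_{k_n}|C_{k_n}|<\infty$, completing the proof.
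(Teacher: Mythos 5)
Your Step 1 (scaling out $\tau^{-n+2\alpha}$ using that $F_{k,\varepsilon}(r),G_{k,\varepsilon}(r)$ depend only on $\varepsilon r$) and Step 2 (the endpoint analysis at $u\to0$ and $u\to\infty$, yielding exactly the window $1/2<\alpha<\gamma+1/2$) are correct and consistent with what the paper does implicitly through \eqref{thein}. Your route \emph{(a)} in Step 3 is indeed the paper's route: the integral is evaluated in closed form via Lemma \ref{formint} (an Appell-$F_2$ representation of $\int_0^\infty t^{d-1}e^{-ht}\,{}_1F_1\cdot{}_1F_1\,dt$), after regularizing off-diagonal and inserting a factor $e^{-\delta\epsilon}$ so that the hypothesis $|\alpha|+|\beta|<|h|$ of that lemma is met.

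The genuine gap is that you never execute the step you yourself identify as ``the main obstacle'', namely $\sup_{k_n}|C_{k_n}|<\infty$, which is the entire content of the ``Moreover'' clause and is what allows the summation over partial wave subspaces in Theorem \ref{thmor}. Route \emph{(a)} is not a matter of ``invoking a classical evaluation as a combination of ratios of Gamma functions'': the Appell series $F_2$ produced by Lemma \ref{formint} must be evaluated at the point $(1,1)$, which lies \emph{outside} its domain of convergence $|x|+|y|<1$, so one needs the analytic continuation \eqref{analcontin} of Biedenharn--Onley--Reynolds expressing $F_2(d;a_1,a_2;b_1,b_2;1,1)$ through two ${}_3F_2$'s at unit argument; one must then check the convergence condition \eqref{convcond} for those ${}_3F_2$'s (this is where the constraint on $\alpha$ reappears as $\alpha>0$, while $\alpha<\gamma+1/2$ keeps $\Gamma(2\gamma+1-2\alpha)$ finite), simplify the Gamma prefactors against the Pochhammer symbols, and only then apply Stirling term by term to see that the resulting sums are $O(\gamma^{-1})$, so that after multiplication by the normalization $C_{k,\nu}$ the constant stays bounded (in fact tends to $0$) as $|k|\to\infty$. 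Your route \emph{(b)} would require uniform-in-$\gamma$ asymptotics of ${}_1F_1(\gamma-i\nu,2\gamma+1,-2iu)$ through the turning region $u\sim\gamma$, which is a substantial piece of analysis in its own right and is not the paper's argument. As written, the conclusion $\sup_{k_n}|C_{k_n}|<\infty$ is asserted, not proved; a complete proof must carry out one of the two routes, and for route \emph{(a)} the indispensable missing ingredients are the regularization needed to apply Lemma \ref{formint} and the analytic continuation of Lemma \ref{rey} together with the final Stirling estimate.
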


\begin{proof}
By recalling the structure of the generalized eigenstates \eqref{eigen1}-\eqref{eigen2}, we have that integral \eqref{trace} takes the form
\begin{equation}\label{thein}
I_1(\tau)=C_{k,\nu}\tau^{2\gamma-(n-1)}
\end{equation}
$$
\times\int_0^{+\infty}\epsilon^{2\gamma-2\alpha}e^{2i\epsilon\tau}{_1F_1}(\gamma- i\nu,2\gamma+1,-2i\epsilon \tau){_1F_1}(\gamma- i\nu,2\gamma+1,-2i\epsilon \tau)d\epsilon
$$
with $$C_{k,\nu}=\frac{2^{2\gamma+2}}{\pi}e^{\pi\nu}\frac{|\Gamma(\gamma+1+ i\nu|^2}{\Gamma(2\gamma+1)^2}e^{2i\xi}$$
where we recall that $\gamma=\sqrt{k^2-\nu^2}$. As the only difference between $I_1$ and $I_2$ is encoded in the sign of $\nu$ (see Remark \ref{negative}) which is virtually unessential, we will limit our discussion to $I_1$, the other case being completely analogous.

Integrals like \eqref{thein} have already been object of study in literature (see e.g. \cite{landlif}, \cite{gargoyle}, \cite{rey}) since they are connected with some different physical problems, as for instance matrix multipole elements for the Dirac Coulomb model, and they present some difficulties being not convergent at infinity as well as in the origin (see \eqref{ashyp}). Singularity at the origin can be dealt with by a standard integration by parts argument (as the one used to define $\Gamma$ function for negative real parts) and is discussed in \cite{sunwright} provided $2\gamma-2\alpha+1$ is not zero or a negative integer, while singularity at infinity is slightly more difficult to be handled.
To make things work, we need first of all to define $I_k(r,s)$ off-diagonal and then use a limiting procedure. Also, the introduction of a term of the form $e^{-\delta\epsilon}$ with $\delta\in\mathbb{R}^+$ in the integral is needed in order to make it convergent; we will then denote with $I^\delta_k$ the integral
$$
I^\delta_k(r,s)=C_{k,\nu}\tau^{2\gamma-(n-1)}
$$
$$
\times\int_0^{+\infty}\epsilon^{2\gamma-2\alpha}e^{\epsilon(i(r+s)-\delta)}{_1F_1}(\gamma- i\nu,2\gamma+1,-2i\epsilon r){_1F_1}(\gamma- i\nu,2\gamma+1,-2i\epsilon s)d\epsilon.
$$ 
This modified integral can be explicitly computed, and its solution is written in terms of a double Appel series, which unfortunately turns out to be not convergent for the values we are interested in. To deal with this problem, it is necessary to introduce an analytic continuation of such a series: we don't include the proofs of these results here as they can be found in corresponding references.

We begin with the following
\begin{lemma}\label{formint}
Let $\alpha$, $\beta$ such that $|\alpha|+|\beta|<|h|$ and $d$ not a negative integer nor zero. Then
\begin{equation}\label{intapp}
\int_0^{+\infty}t^{d-1}e^{-ht}F(a_1,b_1,\alpha t)F(a_2,b_2,\beta t)dt=h^{-d}\Gamma(d)F_2(d;a_1,a_2;b_1,b_2;\frac{\alpha}h,\frac{\beta}h)
\end{equation}
where $F_2$ is defined in \eqref{f2}.
\end{lemma}
\begin{proof}
See e.g. Lemma 1 in \cite{saadhall}.
\end{proof}

We should now point out that the ray of convergence of $F(\alpha;a_1,a_2;b_1;b_2;x,y)$ is $|x|+|y|<1$, which provides a first obstacle to the application of this result to our case. To overcome this difficulty, we rely on the following analytic continuation of $F_2$.
\begin{lemma}\label{rey}
If Re$(b_1-a_1)>0$, Re$(a_1)>0$
\begin{equation}\label{analcontin}
F_2(d;a_1,a_2,b_1,b_2;1,1)
\end{equation}
\begin{equation*}
=e^{-i\pi(b-a-d)}\frac{\Gamma(b_1)\Gamma(d-b_1+1)}{\Gamma(a_1)\Gamma(d-a_1+1)}{_3F_2}(d,b_2-a_2,d-b_1+1;b_2,d-a_1+1;1)
\end{equation*}
\begin{equation*}
+e^{i\pi a}\frac{\Gamma(b_1)\Gamma(d-b_1+1)}{\Gamma(b_1-a_1)\Gamma(d+a_1+1-b_1)}
{_3F_2}(d,a_2,d+1-b_1;b_2,d+a+1-b_1;1)
\end{equation*}
\end{lemma}

\begin{proof}
See \cite{rey}.

\end{proof}

For the sake of generality we have stated this Lemma with general values of the parameters; we list here for convenience the necessary substitutions to treat our case:
\begin{eqnarray}\label{values}
d&=&2\gamma+1-2\alpha;
\\
\nonumber
a_1=a_2&=&\gamma-i\nu;
\\
\nonumber
b_1=b_2&=&2\gamma+1.
\end{eqnarray}

Since necessary conditions of Lemma \ref{rey} are fullfilled with these values, this result allows us to pass to the limit $I_k^\delta(r,s)$ for $\delta\rightarrow 0$, $s\rightarrow r=:\tau$, and thus leaves us with
$$
I_1(\tau)=C_{k,\nu}\tau^{2\gamma-(n-1)}(2i\tau)^{-(2\gamma+1-2\alpha)}\Gamma(2\gamma+1-2\alpha)
$$
$$
\times F_2(2\gamma+1-2\alpha;\gamma-i\nu,\gamma-i\nu;2\gamma+1,2\gamma+1;1,1)
$$
with
$$C_{k,\nu}=\frac{2^{2\gamma+2}}{\pi}e^{\pi\nu}\frac{|\Gamma(\gamma+1+i\nu|^2}{\Gamma(2\gamma+1)^2}e^{2i\xi}.$$ Using now representation \eqref{analcontin} for $F_2$ and plugging in values \eqref{values}, we are led to 
\begin{equation}\label{Itau}
I_1(\tau)=\tau^{2\alpha-n}C_{k,\nu}(2i)^{-(2\gamma+1-2\alpha)}\Gamma(2\gamma+1-2\alpha)(A+B)
\end{equation}
where 
$$
A=e^{i\pi(\gamma-2\alpha-i\nu)}\frac{\Gamma(2\gamma+1)\Gamma(1-2\alpha)}{\Gamma(\gamma-i\nu)\Gamma(\gamma+2-2\alpha+i\nu)}
$$
$$
\times\sum_m\frac{(2\gamma+1-2\alpha)_m(\gamma+1+i\nu)_m(1-2\alpha)_m}{(2\gamma+1)_m(\gamma+2-2\alpha+i\nu)_mm!},
$$
$$
B=e^{i\pi(\gamma-i\nu)}\frac{\Gamma(2\gamma+1)\Gamma(1-2\alpha)}{\Gamma(\gamma+1+i\nu)\Gamma(\gamma+1-2\alpha-i\nu)}
$$
$$
\times\sum_m\frac{(2\gamma+1-2\alpha)_m(\gamma-i\nu)_m(1-2\alpha)_m}{(2\gamma+1)_m(\gamma+1-2\alpha-i\nu)_mm!}.
$$
Both the $_3F_2$ functions above turn to be convergent: we can indeed explicitly check convergence condition \eqref{convcond} to give in both cases $\alpha>0$. Notice also that assumption $\alpha<\gamma+1/2$ guarantees boundedness on the term $\Gamma(2\gamma+1-2\alpha)$. To conclude with, we now need to take the real part of \eqref{Itau} and show that it is a bounded function of $\gamma$. After some simplifications, we have
\begin{eqnarray}\label{reref}
\nonumber
{\rm Re}(I_1(\tau))&=&\frac{2^{1+2\alpha}e^{2\pi\nu}}\pi\frac{\Gamma(2\gamma+1-2\alpha)\Gamma(1-2\alpha)}{\Gamma(2\gamma+1)}
\\
\nonumber
&\times&{\rm Re}\Big[e^{-i\frac\pi2(2\alpha+1)}\frac{\Gamma(\gamma+1+i\nu)(\gamma-i\nu)}{\Gamma(\gamma+2-2\alpha-i\nu)}\sum_m\frac{(2\gamma+1-2\alpha)_m(\gamma+1+i\nu)_m(1-2\alpha)_m}{(2\gamma+1)_m(\gamma+2-2\alpha+i\nu)_mm!}
\\
\nonumber
&+&e^{i\frac\pi2(2\alpha-1)}\frac{\Gamma(\gamma+1-i\nu)}{\Gamma(\gamma+1-2\alpha-i\nu)}
\sum_m\frac{(2\gamma+1-2\alpha)_m(\gamma-i\nu)_m(1-2\alpha)_m}{(2\gamma+1)_m(\gamma+1-2\alpha-i\nu)_mm!}\Big]
\end{eqnarray}
where we have written $i^{-(2\gamma+1-2\alpha)}=e^{-i\frac\pi2(2\gamma+1-2\alpha)}$. Using now the basic property of the Pochhammer symbol $(x)_m=\Gamma(x+m)/\Gamma(x)$ many other terms simplify, leaving us with
$$
{\rm Re}(I_1(\tau))=\frac{2^{1+2\alpha}e^{2\pi\nu}}\pi
$$
$$
\times{\rm Re}
\Big[e^{-i\frac\pi2(2\alpha+1)}(\gamma-i\nu)\sum_m\frac{\Gamma(2\gamma+1-2\alpha+m)\Gamma(\gamma+1+m+i\nu)\Gamma(1-2\alpha+m)}{\Gamma(2\gamma+1+m)\Gamma(\gamma+2-2\alpha+m-i\nu)m!}
$$
$$
+e^{i\frac\pi2(2\alpha-1)}(\gamma-i\nu)
\sum_m\frac{\Gamma(2\gamma+1-2\alpha+m)\Gamma(\gamma+m-i\nu)\Gamma(1-2\alpha+m)}{\Gamma(2\gamma+1+m)\Gamma(\gamma+1-2\alpha+m+i\nu)m!}\Big].
$$
Stirling formula for Gamma function
$$
\Gamma(z)=\displaystyle\sqrt{\frac{2\pi}z}\left(\frac{z}e\right)^z\left(1+O\left(\frac1z\right)\right)
$$
shows that both the series above are asymptotic (in $\gamma$) to $\gamma^{-1}$, and therefore the proof is concluded.

\end{proof}

\begin{remark}\label{fail}
It should be noticed that when $\alpha\rightarrow 1$ the term $\Gamma(1-2\alpha)$ produces a singularity of order 1, as it is seen by applying well known property $\Gamma(z)\Gamma(1-z)=\pi/\sin(\pi z)$. Anyway, it can be checked that this singularity vanishes when taking the real part. Calculations are very easy in the unperturbed case (i.e. when $\nu=0$): in this case indeed everything is real except for the exponential terms, which become respectively $e^{-i\frac32\pi}$ and $e^{i\frac12\pi}$ which are purely imaginary. In the perturbed case things are a bit more complicated, but the problem has already been dealt with in literature by some limiting argument (see \cite{rey} and references therein). On the other hand, as $\alpha$ approaches $1/2$ the singularity produced by the term $\Gamma(1-2\alpha)$ is not balanced anymore (exponential terms have a nonzero real part): this rules out the case $\alpha=1/2$ and therefore the chance of recovering Strichartz estimates with standard arguments from our proof. The analysis of this problem will be the object of forthcoming works.
\end{remark}

\section{Appendix: special functions, properties and integrals.}

For reader's convenience we include a short appendix in which we recall definitions and basic properties of the special functions coming into play. We refer to \cite{slater} and \cite{erd} for further details and for a deeper insight of the argument. 
\begin{definition}
Given $a_1,\dots ,a_p$ and $b_1,\dots, b_q$ complex numbers, we define the \emph{generalized hypergeometric function} as
\begin{equation}\label{hyp}
_pF_q(a_1,\dots ,a_p;b_1,\dots, b_q;z):=\displaystyle\sum_{n=0}^\infty
\frac{(a_1)_n\dots(a_p)_n}{(b_1)_n\dots(b_q)_n}\frac{z^n}{n!}
\end{equation}
provided the sum of the series is finite, where we are using the \emph{Pochhammer symbols}
\begin{equation*}
(a)_0=1,\qquad (a)_n=a(a+1)\dots(a+n-1),\quad n\geq1.
\end{equation*}
\end{definition}
The first thing to investigate is, of course, under which conditions these series converge. As a very general framework, we can summarize the situation as follows
\begin{itemize}
\item if $p\leq q$ the series is convergent for all values of $z$;
\item if $p=q+1$ the series is convergent for $|z|<1$, and for the special values $z=1$ if 
\begin{equation}\label{convcond}{\rm Re}\left(\sum_{n=1}^qb_n-\sum_{m=1}^pa_m\right)>0\end{equation}
and $z=-1$ if
$${\rm Re}\left(\sum_{n=1}^qb_n-\sum_{m=1}^pa_m\right)>-1;$$
\item
if $p> q+1$ the series never converges except for $z=0$.
\end{itemize}

In this work we are especially interested in the \emph{confluent hypergeometric function} $_1F_1(a,c,z)$, which is thus the case $p=q=1$. This series is therefore known to be convergent for every finite value of $z$ (if $c$ is not zero nor a negative integer). Moreover, if $a$ is a negative integer or zero, $_1F_1$ reduces to a polynomial of degree $|a|$. Confluent hypergeometric functions can be thought of as a limiting case of the celebrated \emph{Gauss hypergeometric serie}, that namely is case $p=2$, $q=1$ in \eqref{hyp}, via the relation
\begin{equation*}
\displaystyle
_1F_1(a,c;z)=\lim_{b\rightarrow\infty}\:_2F_1(a,b;c;z/b).
\end{equation*}
These functions satisfy the differential equation
\begin{equation*}
zu''+(c-z)u'-au=0.
\end{equation*}
 The asymptotic behaviour of $_1F_1$ can be derived by writing
\begin{equation}\label{ashyp}
_1F_1(a,b,z)\cong\frac{\Gamma(b)}{\Gamma(b-a)}(-z)^{-a}G(a,a-b+1,z)+\frac{\Gamma(b)}{\Gamma(a)}e^zz^{a-b}G(b-a,1-a,z)
\end{equation}
where $G$ has the asymptotic series representation
\begin{equation*}
G(a,b,z)=1+\frac{ab}{1!z}+\frac{a(a+1)b(b+1)}{2!z^2}+...
\end{equation*}
Among the properties that confluent hypergeometric functions satisfy we recall the following one
\begin{equation}\label{twist}
F(a,c,z)=e^zF(c-a,c,-z).
\end{equation}
Among the possible generalizations of hypergeometric function a very useful one is given by the so called \emph{Appell functions} (see \cite{appel}). Historically, the motivation for introducing series of this type is connected to investigating different kind of products of two different Gauss functions. In particular, we are here interested in the following, which is sometimes called Appell function of the second type,
\begin{equation}\label{f2}
F_2(d;a_1,a_2;b_1;b_2;x,y)\displaystyle=\sum_{m=0}^\infty\sum_{n=0}^\infty\frac{(d)_{m+n}(a_1)_m(a_2)_n}{(b_1)_m(b_2)_nm!n!}x^my^n.
\end{equation}
 It can be seen that convergence condition for $F_2$ is given by $|x|+|y|<1$.

\end{document}